\newtheorem{theo+}              {Theorem}           [section]
\newtheorem{prop+}  [theo+]     {Proposition}
\newtheorem{coro+}  [theo+]     {Corollary}
\newtheorem{lemm+}  [theo+]     {Lemma}
\newtheorem{exam+}  [theo+]     {Example}
\newtheorem{rema+}  [theo+]     {Remark}
\newtheorem{defi+}  [theo+]     {Definition}
\newtheorem{clai+}  [theo+]     {Claim}
\newenvironment{theorem}{\begin{theo+}}{\end{theo+}}
\newenvironment{proposition}{\begin{prop+}}{\end{prop+}}
\newenvironment{corollary}{\begin{coro+}}{\end{coro+}}
\theoremstyle{plain} \theoremstyle{remark}
\newtheorem{remark}{Remark}
\newtheorem{example}{Example}
\def \r{\mbox{${\mathbb R}$}}
\def\E{/\kern-1.0em \equiv }
\author{Ze-Ping Wang$^{*}$, Ye-Lin Ou$^{**}$ and Yong-Gui Luo$^{*}$}
\address{Department of Mathematics,\newline\indent Guizhou
Normal University,\newline\indent Guiyang 550025,\newline\indent
People's Republic of China
\newline\indent E-mail:zpwzpw2012@126.com \;(Wang) \\\newline\indent luoyonggui851010@hotmail.com\;(Luo)\\\newline\indent  \newline\\
\newline\indent Department of
Mathematics,\newline\indent Texas A $\&$ M University-Commerce,
\newline\indent Commerce TX 75429,\newline\indent USA.\newline\indent
E-mail:yelin.ou@tamuc.edu \;(Ou)}
\thanks{*Supported by the Natural Science Foundation of China (No. 11861022). \\
\indent** Supported by a grant from the Simons Foundation ( 427231,
Ye-Lin Ou).}
\date{2/17/2023}
\begin{document}
\title[Harmonic Riemannian submersions]{Harmonic Riemannian submersions from 3-dimensional geometries}

\subjclass{58E20, 53C43} \keywords{Harmonic map, harmonic morphisms, Riemannian
submersions, 3-dimensional BCV spaces, 3-dimensional manifolds.}

\maketitle

\section*{Abstract}
\begin{quote}
{\footnotesize  In this paper, we study harmonic Riemannian submersions from 3-dimensional geometries using the ( generalized) integrability data associated to an orthonormal frame natural to a Riemannian submersion. We give complete classifications of harmonic Riemannian submersions from Thurston's 3-dimensional geometries, 3-dimensional BCV spaces and  Berger sphere  into a surface. We also give some explicit constructions of these harmonic Riemannian submersions.}

\end{quote}

\maketitle

\section{Introduction and Preliminaries}
All manifolds, maps, tensor fields studied in this paper are
assumed to be smooth unless there is an otherwise statement.\\

Recall that a {\em harmonic map} $\varphi:(M, g)\to (N,
h)$ between Riemannian manifolds is a critical point of the energy functional
\begin{equation}\nonumber
E\left(\varphi,\Omega \right)= \frac{1}{2} {\int}_{\Omega}
\left|{\rm d}\varphi \right|^{2}{\rm d}x.
\end{equation}
 and it is therefore the solution of the corresponding Euler-Lagrange equation (see \cite{BW1,EL1}). This
equation is given by the vanishing of the tension field $\tau(\varphi)={\rm
Trace}_{g}\nabla {\rm d} \varphi$, i.e., $\varphi$
is harmonic  if and only if its tension field $\tau(\varphi)={\rm
Trace}_{g}\nabla {\rm d} \varphi$ vanishes identically, i.e.,
\begin{equation}\notag
\tau(\varphi)={\rm
Trace}_{g}\nabla {\rm d} \varphi=0.
\end{equation}

A  Riemannian submersion is called a {\bf harmonic Riemannian submersion} if the Riemannian submersion is a harmonic map.\\

A fundamental problem in the study of harmonic maps is to classify
all harmonic maps between certain model spaces. It is well known (see \cite{BW1}) that there is an interesting class of harmonic maps called harmonic morphisms which are characterized as horizontally weakly conformal harmonic maps. We also have the following inclusion relations\\

$\{Riemannian \;submersions\} \;\subset\;Conformal \;submersions\}$\\

$\hskip1cm \;\subset\; \{horizontally\; weakly\; conformal\;maps\}.$\\

It follows that harmonic Riemannian submersion is a subclass of harmonic morphisms. We refer a reader to the book \cite{BW1} and the vast references therein for basic concepts, interesting applications and important links of harmonic morphisms.

 In this paper, we study  harmonicity of Riemannian submersions  from 3-dimensional Thurston's 3-dimensional geometries,  BCV 3-spaces and  Berger sphere $S_{\varepsilon}^3$ into a surface.
 One of our motivations is that the definition of Riemannian submersions, in a sense, is the dual notion of  isometric immersions (i.e., submanifolds). There are many interesting examples of harmonic isometric immersions of a surface (i.e., minimal surfaces) into 3-manifolds. For example, we state the following: planes or catenoid in $\r^3$ are minimal surfaces; or there are harmonic embedding of $S^2$ into $S^3$, equipped with arbitrary metric \cite{Sm}. On the other hand, there are many examples and classification results for harmonic Riemannian submersions  from 3-dimensional Riemannian manifolds into a surface:
 Hopf fibration $\pi: S^3\to S^2(4)$ and the orthogonal projection $\pi: \r^3\to \r^2$ are harmonic Riemannian submersion (see \cite{BW1,BW2});  there is no harmonic Riemannian submersion
 $\pi:H^3\to(N^2,h)$ no matter what $(N^2,h)$ is (see \cite{WO}); harmonic Riemannian submersions from product spaces$M^2\times\r$ and Sol space have been completely classified (see \cite{WO3,WO4} for details).\\

In this paper, we  study harmonic Riemannian submersions from 3-dimensional geometries using the ( generalized) integrability data associated to an orthononmal frame natural to a Riemannian submersion. We obtain a complete classifications of harmonic Riemannian submersions from Thurston's 3-dimensional geometries, 3-dimensional BCV spaces and  Berger sphere  into a surface. We also give some explicit constructions of these harmonic Riemannian submersions.

\section{Harmonic Riemannian submersions from 3-manifolds}

Let $\pi:(M^3,g) \to (N^2,h)$ be a Riemannian
submersion. Following \cite{WO}, we will call an orthonormal frame  $\{e_1, e_2, e_3\}$ of the total space a {\bf natural orthonormal frame} if $e_1, e_2$ are horizontal and $e_3$ is vertical. If $e_1, e_2$ are basic, then we call  $\{e_1, e_2, e_3\}$ an {\bf adapted frame}  to Riemannian submersion.

With respect to a natural orthonormal frame  $\{e_1, e_2, e_3\}$  we have (see \cite{WO1}) the following  Lie brackets \begin{equation}\label{Le}
[e_1,e_3]=f_{3}e_2+\kappa_1e_3,\;
[e_2,e_3]=-f_{3}e_1+\kappa_2e_3,\;
[e_1,e_2]=f_1 e_1+f_2e_2-2\sigma e_3.
\end{equation}
with  the functions $\{f_1, f_2, f_3, \kappa_1,\kappa_2, \sigma \}$ defined on the total space called  the  generalized integrability data.

Note  (see \cite{WO1})  also that the natural frame $\{e_1,\; e_2, \;e_3\}$ is  adapted to the Riemannian submersion  if and only if $f_{3}=0$.\\

A further computation gives the terms of the curvature of the total space as
\begin{equation}\label{RC}
\begin{cases}
R^{M}(e_1,e_3,e_1,e_2)=-e_1(\sigma)+2\kappa_1\sigma,\\
R^{M}(e_1,e_3,e_1,e_3)=e_1(\kappa_1)+\sigma^2-\kappa_{1}^2+\kappa_2f_1,\;\\
R^{M}(e_1,e_3,e_2,e_3)=e_1(\kappa_2)-e_3(\sigma)-\kappa_{1}f_{1}-\kappa_1\kappa_2,\;\\
R^{M}(e_1,e_2,e_1,e_2)=e_1(f_2)-e_2(f_1)-f_{1}^{2}-f_{2}^{2}+2f_{3}\sigma-3\sigma^2,\\
R^{M}(e_1,e_2,e_2,e_3)=-e_2(\sigma)+2\kappa_2\sigma,\\
R^{M}(e_2,e_3,e_1,e_3)=e_2(\kappa_{1})+e_3(\sigma)+\kappa_2 f_2-\kappa_1 \kappa_2,\\
R^{M}(e_2,e_3,e_2,e_3)=\sigma^{2}+e_2(\kappa_2)-\kappa_1f_2- \kappa_2^2.
\end{cases}
\end{equation}
The  Gauss curvature of the base space is given by

\begin{equation}\label{GCB}
K^N=e_1(f_2)-e_2(f_1)-f_1^2-f_2^2+2f_{3}\sigma.
\end{equation}
When $f_{3}=0$, then Gauss curvature of the base space becomes
\begin{equation}\label{GCB1}
K^N=e_1(f_2)-e_2(f_1)-f_1^2-f_2^2.
\end{equation}

Recall that the tension of the Riemannian  submersion $\pi$ is given by
\begin{equation}\notag
\tau(\pi)=\nabla^{\pi}_{e_i}d\pi(e_i)-d\pi(\nabla^{M}_{e_i}e_i)=-d\pi(\nabla^{M}_{e_3}e_3)
=-d\pi(\kappa_1e_1+\kappa_2e_2).
\end{equation}
A Riemannian  submersion $\pi$ is harmonic if and only if  $\tau(\pi)=0$. Thus, we have
\begin{proposition}\label{PH}
A Riemannian submersion $\pi:( M^3 , g)\to (N^2,h)$ is harmonic if and only if  $\kappa_1=\kappa_2=0$.
\end{proposition}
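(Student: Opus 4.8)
The plan is to read the result off directly from the characterization of harmonicity by the vanishing of the tension field. Since a Riemannian submersion is harmonic precisely when $\tau(\pi)=0$, and since the computation preceding the statement gives $\tau(\pi)=-d\pi(\kappa_1e_1+\kappa_2e_2)$, the entire matter reduces to deciding when the single horizontal vector $\kappa_1e_1+\kappa_2e_2$ is annihilated by $d\pi$.

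I would organize the argument in three steps. First, I would justify the intermediate expression $\tau(\pi)=-d\pi(\nabla^M_{e_3}e_3)$ by tracing $\nabla d\pi$ over the natural frame: the vertical direction gives $-d\pi(\nabla^M_{e_3}e_3)$ because $d\pi(e_3)=0$, while the two horizontal directions contribute nothing, since along basic horizontal fields $d\pi$ intertwines the horizontal part of $\nabla^M$ with $\nabla^N$ (most transparently after passing to a local adapted frame, where $f_3=0$), so those two terms cancel in the trace. Second, I would compute $\nabla^M_{e_3}e_3$ from the brackets (\ref{Le}) via the Koszul formula for an orthonormal frame, $2\,g(\nabla_{e_i}e_j,e_k)=g([e_i,e_j],e_k)-g([e_j,e_k],e_i)+g([e_k,e_i],e_j)$; pairing $\nabla^M_{e_3}e_3$ with $e_1$, $e_2$, $e_3$ and reading off the $e_3$-components of $[e_1,e_3]$ and $[e_2,e_3]$ yields $\nabla^M_{e_3}e_3=\kappa_1e_1+\kappa_2e_2$ (the functions $f_3$ drop out, and there is no $e_3$-component). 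Third, because $\pi$ is a Riemannian submersion, $d\pi$ restricts to a linear isometry on the horizontal plane spanned by $\{e_1,e_2\}$, so $d\pi(e_1)$ and $d\pi(e_2)$ are linearly independent; hence $\tau(\pi)=-\kappa_1\,d\pi(e_1)-\kappa_2\,d\pi(e_2)$ vanishes if and only if $\kappa_1=\kappa_2=0$.

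I do not expect a genuinely hard step here once the tension field is in the form $-d\pi(\kappa_1e_1+\kappa_2e_2)$; the only points requiring care lie in deriving that form. Geometrically $\kappa_1e_1+\kappa_2e_2$ is the horizontal part of $\nabla^M_{e_3}e_3$, i.e. the mean curvature vector of the one-dimensional fibres, so the proposition is the familiar statement that a Riemannian submersion is harmonic exactly when its fibres are minimal --- here, with one-dimensional fibres, exactly when they are geodesics. The most computational ingredient is the Koszul evaluation of $\nabla^M_{e_3}e_3$, and even there the vanishing of the $e_3$-component is automatic from $g(\nabla^M_{e_3}e_3,e_3)=\tfrac12\,e_3(|e_3|^2)=0$.
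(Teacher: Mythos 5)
Your proposal is correct and follows exactly the route the paper takes: the paper proves the proposition by the displayed computation $\tau(\pi)=-d\pi(\nabla^{M}_{e_3}e_3)=-d\pi(\kappa_1e_1+\kappa_2e_2)$ and the observation that this vanishes precisely when $\kappa_1=\kappa_2=0$. You simply supply the details the paper leaves implicit (the cancellation of the horizontal trace terms, the Koszul evaluation of $\nabla^{M}_{e_3}e_3$, and the injectivity of $d\pi$ on the horizontal plane), all of which check out.
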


\begin{example}\label{Ex1}
The Riemannian submersion (projection) from a product  space
\begin{equation}\notag
\begin{array}{lll}
\pi  : M^2\times\r=( \r^3 , g=e^{2p(x,y)}{\rm d}x^{2}+{\rm
d}y^{2}+{\rm d}z^2)\to M^2=(\r^2 ,e^{2p(x,y)}{\rm d}x^{2}+{\rm
d}y^{2}),\\
\pi(x,y,z) =(x,y)
\end{array}
\end{equation}
is  a harmonic map.
\end{example}
Clearly, $\{e_1=e^{-p}\frac{\partial}{\partial x},\;e_2=\frac{\partial}{\partial y},\;e_{3}=\frac{\partial}{\partial z}\}$ form an
orthonormal frame on  $(\r^3,g=e^{2p(x,y)}dx^2+dy^2+dz^2)$ adapted to the Riemannian submersion
with $d\pi(e_3)=0$, and \;$d\pi(e_1)=e^{-p}\frac{\partial}{\partial x}$,\;$d\pi(e_2)=\frac{\partial}{\partial y}$  form an
orthonormal frame on
 $(\r^2,e^{2p(x,y)}dx^2+dy^2)$.  A straightforward computation gives the Lie brackets
\begin{equation}\label{g0}
\begin{array}{lll}
[e_1,e_2]=p_ye_1,\;[e_1,e_3]=0,\;\;[e_2,e_3]=0,
\end{array}
\end{equation}
from which we obtain the integrability data as
$f_1=p_y,\;\sigma=f_2=0,\;\;\kappa_1=\kappa_2=0$.
 We apply Proposition \ref{PH} to conclude that
$\pi$ is harmonic. A harmonic map is always biharmonic.\\

By definition,  harmonic maps  are always biharmonic and biharmonic maps include harmonic maps as special cases.
The following example found in \cite{WO1} shows that the following Riemannian submersion is biharmonic, but not harmonic.
\begin{example}(see also \cite{WO1})\label{Ex3}
The Riemannian submersion
\begin{align}\notag
\pi:H^2\times\r=(\r^{3},e^{2y}dx^2+dy^2+dz^2) &\to
(\r^2 ,dy^2 + dz^2),\; \notag \pi(x,y,z) =(y,z)
\end{align}
is not harmonic. Sot, it is a proper biharmonic map.
\end{example}
The following example shows that the Riemannian submersion from Nil space is neither harmonic nor biharmonic.
\begin{example}\label{Ex3}
The Riemannian submersion from Nil space
\begin{equation}\notag
\pi  : ( \r^3 , g_{Nil}={\rm d}x^{2}+{\rm
d}y^{2}+({\rm d}z-x{\rm d}y)^{2})\to (\r^2 ,dx^2 + (1+x^2)^{-2}dz^2),\;
\pi(x,y,z) =(x,z)
\end{equation}
is neither harmonic nor biharmonic.
\end{example}
We can check that $e_1=\frac{\partial}{\partial
x},\;e_2=-\frac{x}{\sqrt{1+x^2}}\frac{\partial}{\partial
y}-\sqrt{1+x^2}\frac{\partial}{\partial z}, \;
e_3=\frac{1}{\sqrt{1+x^2}}\frac{\partial}{\partial y}$ form an
orthonormal frame on Nil space adapted to the Riemannian submersion
with $d\pi(e_3)=0$, and\;$d\pi(e_1)=\frac{\partial}{\partial
x}$, \;$d\pi(e_2)=-\sqrt{1+x^2}\frac{\partial}{\partial z}$ form an
orthonormal frame on
the base space.  We can compute the Lie brackets as
\begin{align*}
[e_1,e_2]=\frac{x}{1+x^2}e_2-\frac{1-x^2}{1+x^2}e_3,\;
[e_1,e_3]=-\frac{x}{1+x^2}e_3,\; \;[e_2,e_3]=0,
\end{align*}
from which we obtain the integrability data  as
$f_1=0,\;f_2=\frac{x}{1+x^2},\;\;\kappa_1=-\frac{x}{1+x^2},\;\;\sigma=\frac{1-x^2}{2(1+x^2)},\;\;\kappa_2=0$.
Since $\kappa_1=-\frac{x}{1+x^2}\not\equiv0$, we apply Proposition \ref{PH} to conclude that
$\pi$ is not harmonic .\\
From Example 1 in \cite{WO}, the Riemannian submersion
$\pi$ is  not biharmonic, either.\\

We give the following proposition which will be extensively used in the rest of the paper.
\begin{proposition}\label{PH1}
A Riemannian submersion $\pi:(M^3,g) \to (N^2,h)$  with a natural  orthonormal frame  $\{e_1, e_2, e_3\}$ and the  generalized integrability data $\{f_1, f_2, f_3, \kappa_1,\kappa_2, \sigma \}$ is harmonic, then its integrability data  solve the system
\begin{equation}\label{RC0}
\begin{cases}
R^{M}(e_1,e_3,e_1,e_2)=-e_1(\sigma),\\
R^{M}(e_1,e_3,e_1,e_3)=\sigma^2,\;\\
R^{M}(e_1,e_3,e_2,e_3)=-e_3(\sigma)=0,\;\\
R^{M}(e_1,e_2,e_1,e_2)=K^N-3\sigma^2,\\
R^{M}(e_1,e_2,e_2,e_3)=-e_2(\sigma),\\
R^{M}(e_2,e_3,e_1,e_3)=e_3(\sigma)=0,\\
R^{M}(e_2,e_3,e_2,e_3)=\sigma^{2},
\end{cases}
\end{equation}
where $K^N=e_1(f_2)-e_2(f_1)-f_{1}^{2}-f_{2}^{2}+2f_{3}\sigma$ is Gauss curvature of the base space.\\

Conversely, if  the integrability data  associated to a natural frame of a Riemannian submersion satisfy $\sigma\neq 0$ and solve the system (\ref{RC0}), then it is harmonic.
\end{proposition}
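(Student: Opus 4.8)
The plan is to read both implications directly off the general curvature identities (\ref{RC}), using Proposition \ref{PH} as the bridge: a Riemannian submersion is harmonic precisely when $\kappa_1=\kappa_2=0$. Thus the whole statement reduces to comparing (\ref{RC}) with the simplified system (\ref{RC0}) under this vanishing condition, and no independent geometric input beyond the curvature formulas is needed.

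For the forward direction I would assume $\pi$ is harmonic, so that $\kappa_1=\kappa_2=0$ by Proposition \ref{PH}, and substitute into each of the seven lines of (\ref{RC}). Most lines collapse at once: the $2\kappa_i\sigma$, $e_i(\kappa_j)$, $\kappa_i^2$, and $\kappa_i f_j$ terms all drop, leaving exactly the right-hand sides of (\ref{RC0}) for the first, second, fourth, fifth, and seventh equations, where the fourth uses the Gauss-curvature identity (\ref{GCB}) to rewrite the quadratic $f$-terms as $K^N-3\sigma^2$. The only lines needing a further observation are the third and sixth, which after substitution read $R^{M}(e_1,e_3,e_2,e_3)=-e_3(\sigma)$ and $R^{M}(e_2,e_3,e_1,e_3)=e_3(\sigma)$. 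Here I would invoke the pair symmetry $R^{M}(X,Y,Z,W)=R^{M}(Z,W,X,Y)$ of the Riemann curvature tensor, which forces these two components to coincide; hence $-e_3(\sigma)=e_3(\sigma)$, so $e_3(\sigma)=0$, giving the stated form of both equations.

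For the converse, assume $\sigma\neq 0$ and that the data solve (\ref{RC0}). The argument is even shorter: comparing the first equation of (\ref{RC0}) with the first line of (\ref{RC}) yields $-e_1(\sigma)=-e_1(\sigma)+2\kappa_1\sigma$, hence $2\kappa_1\sigma=0$, while the fifth equation similarly gives $2\kappa_2\sigma=0$. Since $\sigma\neq0$, both $\kappa_1$ and $\kappa_2$ vanish, and Proposition \ref{PH} then certifies that $\pi$ is harmonic.

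The only step that is not pure bookkeeping is the use of the curvature symmetry to force $e_3(\sigma)=0$ in the forward direction; every other manipulation is a direct substitution into (\ref{RC}) and (\ref{GCB}). I therefore expect no genuine obstacle, but I would carefully check the sign conventions in the third and sixth lines of (\ref{RC}) to be sure the symmetry is applied to the correct pair of components, and I would note that the converse hypothesis $\sigma\neq0$ is used only to cancel the factor $\sigma$ from $2\kappa_i\sigma=0$.
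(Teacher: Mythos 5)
Your proposal is correct and follows essentially the same route as the paper: substitute $\kappa_1=\kappa_2=0$ (via Proposition \ref{PH}) into (\ref{RC}) for the forward direction, use the pair symmetry $R^{M}(e_1,e_3,e_2,e_3)=R^{M}(e_2,e_3,e_1,e_3)$ to force $e_3(\sigma)=0$ in the third and sixth lines, and for the converse compare the first and fifth lines of (\ref{RC0}) with (\ref{RC}) to get $2\kappa_i\sigma=0$ and cancel $\sigma\neq0$. The only cosmetic difference is that the paper phrases the symmetry step by identifying both components with ${\rm Ric}(e_1,e_2)$, whereas you invoke the pair symmetry of the curvature tensor directly; these are the same observation.
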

\begin{proof}
Let $\pi:(M^3,g) \to (N^2,h)$ be a Riemannian submersion with a natural orthonormal frame  $\{e_1, e_2, e_3\}$ and the  generalized integrability data $\{f_1, f_2, f_3, \kappa_1,\kappa_2, \sigma \}$. It follows from Proposition \ref{PH} that the Riemannian  submersion $\pi$ is harmonic if and only if  $\kappa_1=\kappa_2=0$. Substituting this into (\ref{RC}), it is not difficult to check that the 1st, the 2nd, the 4th, the 5th and the 7th  equation of (\ref{RC0}) hold.  We can also obtain the 3rd  and  the 6th equation  of  (\ref{RC}). Indeed,  substituting $\kappa_1=\kappa_2=0$ into the 3rd  and  the 6th equation  of  (\ref{RC}) separately, we have $R^{M}(e_1,e_3,e_2,e_3)=-e_3(\sigma)$ and $ R^{M}(e_2,e_3,e_1,e_3)=e_3(\sigma)$. However, $R^{M}(e_1,e_3,e_2,e_3)=R^{M}(e_2,e_3,e_1,e_3)={\rm Ric}(e_1,e_2)$, i.e., $-e_3(\sigma)=e_3(\sigma)$, which implies $e_3(\sigma)=0$. Then, the 3rd equation and  the 6th equation  of the system (\ref{RC0}) also hold.\\

Conversely, if $\sigma\neq0$, comparing the 1st equation  of  (\ref{RC0}) with  the 1st equation  of  (\ref{RC}), we obtain $2\kappa_1\sigma=0$ and hence $\kappa_1=0$. Similarly, comparing the 5th equation  of  (\ref{RC0}) with  the 5th equation  of  (\ref{RC}), we have $2\kappa_2\sigma=0$ and hence $\kappa_2=0$. This implies that
 the Riemannian submersion $\pi$ is harmonic by Proposition \ref{PH}.\\
 From which, the proposition follows.
\end{proof}
\begin{remark}\label{r1}
(a) We would like to point out that the condition $\sigma\neq0$  is necessary in the second statement of Proposition \ref{PH1}. For example,  we can check that a Riemannian submersion $\pi:(\r^3, {\rm d}\rho^2+{\rm d}z^2+\rho^2{\rm d}\theta^2)\to(\r^2,{\rm d}\rho^2+{\rm d}z^2)$, $\pi(\rho, z, \theta)=(\rho, z)$  from $\r^3$ into $\r^2$ with  $\sigma=0$  is not harmonic. But, (\ref{RC0}) holds\\

Note that the orthonormal frame  $\{e_1=\frac{\partial}{\partial\rho},\; e_2=\frac{\partial}{\partial z}, \;e_3=\frac{1}{\rho}\frac{\partial}{\partial\theta}\}$ on $(\r^3,{\rm d}\rho^2+{\rm d}z^2+\rho^2{\rm d}\theta^2)$
is adapted to the Riemannian submersion $\pi$ with
${\rm d}\pi(e_i ) = \varepsilon_i , i = 1,2$ and $e_3$ being vertical, where $\{\varepsilon_1=\frac{\partial}{\partial\rho},\; \varepsilon_2=\frac{\partial}{\partial z}\}$ , form an orthonormal frame on the base space $(\r^2,{\rm d}\rho^2+{\rm d}z^2)$. A straightforward computation gives the Lie
brackets
\begin{equation}\notag
[e_1, e_3] = -\frac{1}{\rho} e_3, [e_2, e_3] = 0, [e_1, e_2] = 0.
\end{equation}
The integrability data of the Riemannian submersion $\pi$ are given by
\begin{equation}\label{pr1}
f_1=f_2=\sigma=\kappa_2 = 0,\; \kappa_1=-\frac{1}{\rho}\neq0.
\end{equation}
Since $\kappa_1=-\frac{1}{\rho}\neq0$, then the Riemannian submersion $\pi$ is not harmonic. However,
one can easily compute that $e_1(\kappa_1)=e_1(-\frac{1}{\rho})=\frac{1}{\rho^2}=\kappa_1^2$ and $e_2(\kappa_1)=0$. Substituting this and (\ref{pr1}) into
(\ref{RC}), we find that (\ref{RC0}) holds.\\

(b) It is a interesting fact that the  Riemannian submersion (projection) $\pi:(\r^3, {\rm d}\rho^2+{\rm d}z^2+\rho^2{\rm d}\theta^2)\to(\r^2, {\rm d}\rho^2+\rho^2{\rm d}\theta^2)$, $\pi(\rho, z, \theta)=(\rho, \theta)$  from $\r^3$ into $\r^2$ with  $\sigma=0$  is  harmonic (see \cite{BW1,BW2} for details).
\end{remark}

\section{ Harmonic Riemannian submersions from BCV spaces and Thurston's 3-dimensional geometries}

We first study harmonic Riemannian submersions from the  Bianchi-Cartan-Vranceeanu 3-diemnsional spaces ( 3-dimensional BCV spaces)  which can  be described as (see, e.g., \cite{BDI}):\\
$$M^3_{m,\;l}=(\r^{3},g=\frac{dx^2+dy^2}{[1+m(x^2+y^2)]^2}+[dz+\frac{l}{2}\frac{y
dx-x dy}{1+m(x^2+y^2)}]^2).$$
 We have a globally defined orthonormal frame
\begin{equation}\label{j}
E_{1}=F\frac{\partial}{\partial
x}-\frac{ly}{2}\frac{\partial}{\partial z},\;E_{2}=F
\frac{\partial}{\partial y}+\frac{lx}{2}\frac{\partial}{\partial
z},\;E_{3}=\frac{\partial}{\partial z},
\end{equation}
where $F=1+m(x^2+y^2)$. It is also well known that  BCV
3-spaces  consist of: two of
3-diemnsional space forms $\r^3, S^3$,
the product spaces: $S^2\times\r, H^2\times\r$, and  $\widetilde{SL}(2,\r), Nil, SU(2)$.
A direct computation:
\begin{equation}\label{Lie}
[E_1,E_2]=2mxE_{2}-2myE_{1}+lE_{3},\;{\rm  all\;
other}\;[E_i,E_j]=0,\;i,j=1,2,3.
\end{equation}

Let $\nabla$ denote the Levi-Civita connection of  BCV 3-spaces,
then we have
\begin{equation}\label{BCV1}
\begin{cases}
\nabla_{E_{1}}E_{1}=2myE_{2},\;\;\nabla_{E_{2}}E_{2}=2mxE_{1},\;\;\\
\nabla_{E_{1}}E_{2}=-2myE_{1}+\frac{l}{2}E_{3},\;\;
\nabla_{E_{2}}E_{1}=-2mxE_{2}-\frac{l}{2}E_{3},\;\;\\
\nabla_{E_{3}}E_{1}=\nabla_{E_{1}}E_{3}=-\frac{l}{2}E_{2},\;\;\nabla_{E_{3}}E_{2}=\nabla_{E_{2}}E_{3}=\frac{l}{2}E_{1},\;\;\\
\;\; all \;\; other\;\;
\nabla_{E_i}E_j=0,\;i,j=1,2,3.\;\;\\
\end{cases}
\end{equation}
The possible nonzero components of the
curvatures are given by
\begin{equation}\label{BCV2}
\begin{array}{lll}
 R_{1212}=g(R(E_{1},E_{2})E_{2},E_{1})=4m-\frac{3l^2}{4},\\
R_{1313}=g(R(E_{1},E_{3})E_{3},E_{1})=R_{2323}=g(R(E_{2},E_{3})E_{3},E_{2})=\frac{l^2}{4}.
\end{array}
\end{equation}
and the Ricci curvature:
\begin{equation}\label{BCV3}
\begin{array}{lll}
 {\rm Ric}\, (E_{1},E_{1})={\rm Ric}\, (E_{2},E_{2})=4m-\frac{l^2}{2}\\
  {\rm Ric}\,
(E_{3},E_{3})=\frac{l^2}{2},\;
{\rm all\;other }\; {\rm
Ric}\, (E_i,E_j)=0,\;i\neq j.
\end{array}
\end{equation}

\begin{remark}\label{r2}
By (\ref{j}), (\ref{Lie}) and (\ref{BCV1}),  it is not difficult to find that a map $\pi:M^3_{m,\;l}=(\r^{3},g=\frac{dx^2+dy^2}{[1+m(x^2+y^2)]^2}+[dz+\frac{l}{2}\frac{y
dx-x dy}{1+m(x^2+y^2)}]^2)\to (\r^2, h=\frac{dx^2+dy^2}{[1+m(x^2+y^2)]^2})$ with $\pi(x,y,z)=(x,y)$ is a Riemannian submersion and harmonic.
Note that the orthonomal frame $\{E_{1}=F\frac{\partial}{\partial
x}-\frac{ly}{2}\frac{\partial}{\partial z},\;E_{2}=F
\frac{\partial}{\partial y}+\frac{lx}{2}\frac{\partial}{\partial
z},\;E_{3}=\frac{\partial}{\partial z}\}$, where $F=1+m(x^2+y^2)$, is  adapted to Riemannian submersion $\pi$ and $E_3$ being vertical, and also Gauss curvature of the base space $K^N=4m$.
\end{remark}
 Now we are ready to give the following classification of harmonic Riemannian submersions from BCV 3-spaces.

\begin{theorem}\label{HTh}
A harmonic Riemannian submersion $\pi:M^3_{m,\;l}=(\r^{3},g=\frac{dx^2+dy^2}{[1+m(x^2+y^2)]^2}+[dz+\frac{l}{2}\frac{y
dx-x dy}{1+m(x^2+y^2)}]^2)\to (N^2,h)$  from a BCV space exists only in the case where the target surface has constant Gauss curvature  $K^N=4m$. Furthermore, such harmonic Riemannian submersions
 can be locally described as $\pi:M^3_{m,\;l}=(\r^{3},g=\frac{dx^2+dy^2}{[1+m(x^2+y^2)]^2}+[dz+\frac{l}{2}\frac{y
dx-x dy}{1+m(x^2+y^2)}]^2)\to (\r^2, h=\frac{dx^2+dy^2}{[1+m(x^2+y^2)]^2})$ with $\pi(x,y,z)=(x,y)$\; ( up to equivalence).
\end{theorem}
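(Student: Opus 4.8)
The plan is to combine Proposition \ref{PH1} with the explicit curvature of the BCV space, exploiting the fact that in dimension three the full curvature tensor is determined by the Ricci tensor. First I would take a harmonic Riemannian submersion $\pi$ with natural orthonormal frame $\{e_1,e_2,e_3\}$; Proposition \ref{PH} gives $\kappa_1=\kappa_2=0$, so Proposition \ref{PH1} makes the system (\ref{RC0}) available. Contracting (\ref{RC0}) over the natural frame produces the Ricci components of the total space in that frame: using rows two, four and seven, $ {\rm Ric}(e_1,e_1)={\rm Ric}(e_2,e_2)=K^N-2\sigma^2$ and ${\rm Ric}(e_3,e_3)=2\sigma^2$; using the remaining rows together with the symmetries of $R^M$, ${\rm Ric}(e_1,e_2)=0$ while ${\rm Ric}(e_1,e_3)$ and ${\rm Ric}(e_2,e_3)$ are multiples of $e_2(\sigma)$ and $e_1(\sigma)$.

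On the other hand, (\ref{BCV3}) shows that the Ricci operator of $M^3_{m,l}$ has eigenvalue $\rho:=4m-\frac{l^2}{2}$ on the horizontal plane $\mathrm{span}(E_1,E_2)$ and eigenvalue $\mu:=\frac{l^2}{2}$ on $E_3$, so that ${\rm Ric}(X,Y)=\rho\,g(X,Y)+(\mu-\rho)\,g(X,E_3)g(Y,E_3)$ with $\mu-\rho=l^2-4m$. Writing $\alpha=g(E_3,e_1)$, $\beta=g(E_3,e_2)$, $\gamma=g(E_3,e_3)$, where $\alpha^2+\beta^2+\gamma^2=1$, I would equate the two expressions for each Ricci component. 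The equality of the $(1,1)$ and $(2,2)$ components forces $(\mu-\rho)(\alpha^2-\beta^2)=0$, and the vanishing of the $(1,2)$ component forces $(\mu-\rho)\alpha\beta=0$.

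The crux of the argument, and the step I expect to be the main obstacle, is pinning down the vertical direction $e_3$. When $4m\neq l^2$ (the non-space-form case) $\mu\neq\rho$, so the two relations above give $\alpha=\beta=0$, i.e.\ $e_3=\pm E_3$; the $(3,3)$ equation then yields $\sigma^2=\frac{l^2}{4}$ (hence $\sigma$ is constant, and the off-diagonal equations give $e_1(\sigma)=e_2(\sigma)=0$ consistently), while the $(1,1)$ equation gives $K^N=\rho+2\sigma^2=4m$. When $4m=l^2$ the total space is a space form, the Ricci tensor is a multiple of the identity, and the same two equations directly give $\sigma^2=m$ and $K^N=\rho+2\sigma^2=4m$. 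In either case $K^N$ equals the constant $4m$, which proves the first assertion.

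For the local description, once $e_3=\pm E_3=\pm\frac{\partial}{\partial z}$ the fibres of $\pi$ are the integral curves of $\frac{\partial}{\partial z}$, so $\pi$ factors through the projection $(x,y,z)\mapsto(x,y)$ and the base metric is the induced metric $h=\frac{dx^2+dy^2}{[1+m(x^2+y^2)]^2}$ of Remark \ref{r2}, which is exactly the stated normal form. In the degenerate space-form case $4m=l^2$, where $e_3$ need not equal $E_3$, an isometry of the total space rotates $e_3$ onto $E_3$, so every harmonic Riemannian submersion is equivalent to the standard projection, completing the proof.
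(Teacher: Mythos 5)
Your proof is correct and follows essentially the same route as the paper: contracting the harmonicity system (\ref{RC0}) against the BCV curvature yields exactly the relations $(a_1^3)^2=(a_2^3)^2$ and $a_1^3a_2^3=0$ that the paper reads off directly from the curvature components in (\ref{RCH}), forcing $e_3=\pm E_3$ when $4m\neq l^2$ and giving $K^N=4m$ in every case, so the Ricci contraction is only a repackaging of the same identities. The one soft spot is the space-form case $4m=l^2$, where ``an isometry of the total space rotates $e_3$ onto $E_3$'' is merely a pointwise statement about a single tangent vector; concluding that the entire submersion is equivalent to the standard projection (orthogonal projection from $\r^3$, Hopf fibration from $S^3$) actually rests on the Baird--Wood Bernstein-type classification of harmonic morphisms, which is exactly the citation the paper invokes at the corresponding step.
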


\begin{proof}
Let $\pi:M^3_{m,\;l}\to (N^2,h)$  from BCV 3-space be a Riemannian submersion with
n natural orthonormal frame $\{e_1,\; e_2,\;e_3\}$ and the (generalized) integrability data $\{ f_1, f_2, f_3, \kappa_1, \kappa_2, \sigma\}$. Denoting by $e_i=\sum\limits_{j=1}^{3}a_i^jE_j, i=1,2,3$, a straightforward computation using (\ref{BCV1}), (\ref{BCV2}), (\ref{BCV3})\; and \;(\ref{RC}), we have the following equalities
\begin{equation}\label{RC2}
\begin{array}{lll}
R^{M}(e_1,e_3,e_1,e_2)=-a_{2}^{3}a_{3}^{3}R,\;
R^{M}(e_1,e_3,e_1,e_3)=(a_{2}^{3})^2R+\frac{l^2}{4},\\
R^{M}(e_1,e_3,e_2,e_3)=-a_{1}^{3}a_{2}^{3}R,\;
R^{M}(e_1,e_2,e_1,e_2)=(a_{3}^{3})^2R+\frac{l^2}{4},\\
R^{M}(e_1,e_2,e_2,e_3)=a_{1}^{3}a_{3}^{3}R,\;
R^{M}(e_2,e_3,e_1,e_3)=-a_{1}^{3}a_{2}^{3}R,\;
R^{M}(e_2,e_3,e_2,e_3)=(a_{1}^{3})^2R+\frac{l^2}{4},
\end{array}
\end{equation}
where $R=4m-l^2$.\\
 Note that using Proposition \ref{PH}, the Riemannian  submersion $\pi$ is harmonic if and only if $\kappa_1=\kappa_2=0$.  By Proposition \ref{PH1}, then (\ref{RC2}) turns into
\begin{equation}\label{RCH}
\begin{cases}
e_1(\sigma)=a_{2}^{3}a_{3}^{3}R,\\
\sigma^2=(a_{2}^{3})^2R+\frac{l^2}{4},\;\\
e_3(\sigma)=a_{1}^{3}a_{2}^{3}R=0,\;\\
K^N=3\sigma^2+(a_{3}^{3})^2R+\frac{l^2}{4},\\
e_2(\sigma)=-a_{1}^{3}a_{3}^{3}R,\\
e_3(\sigma)=-a_{1}^{3}a_{2}^{3}R=0,\;\\
\sigma^{2}=(a_{1}^{3})^2R+\frac{l^2}{4},
\end{cases}
\end{equation}
where $R=4m-l^2$ and $K^N=e_1(f_2)-e_2(f_1)-f_{1}^{2}-f_{2}^{2}+2f_{3}\sigma$.\\
To obtain the theorem,  we consider the following two cases:\\

Case I: $R=4m-l^2=0$. In this case, using the 2nd  and the 4th equation of (\ref{RCH}), we obtain $\sigma^2=\frac{l^2}{4}$ and
$K^N=l^2$. If $l=0$ and hence $m=0$, it follows that the  potential BCV 3-space  and the potential base space $N^2$ have to be $\r^3$ and $\r^2$,
respectively. Naturally, using Remark \ref{r2}, the harmonic Riemannian  submersion is an orthononmal projection $\pi:\r^3\cong\r^2\times\r\to \r^2$ up to equivalence(see also \cite{BW1,BW2}). Moreover,  the Riemannian  submersion  $\pi:\r^3\to \r^2$ can be locally expressed
as $\pi:(\r^3,dx^2+dy^2+dz^2)\to (\r^2,dx^2+dy^2)$ with $\pi(x,y,z)=(x,y)$ with respect to local coordinate. If  $l\neq0$ and hence $m=4l^2>0$, it follows that the potential BCV 3-space and the potential base space $N^2$ have to be a 3-sphere $S^3(m)$ with constant sectional curvature $m$  and  a 2-sphere $S^2(4m)$ with Gauss curvature $4m$, respectively. Actually, by Remark \ref{r2},  the harmonic Riemannian  submersion  $\pi:S^3(m)\to S^2(4m)$  can be locally represented as $\pi:S^3(m)=(\r^{3},\frac{dx^2+dy^2}{[1+m(x^2+y^2)]^2}+[dz+\frac{l}{2}\frac{y
dx-x dy}{1+m(x^2+y^2)}]^2)\to S^2(4m)=(\r^2,\frac{dx^2+dy^2}{[1+m(x^2+y^2)]^2})$ with $\pi(x,y,z)=(x,y)$ with respect to local coordinates, where $4m=l^2>0$.\\

Case II: $R=4m-l^2\neq0$. In this case,  we must have  $a_3^3=\pm1$, i.e., $E_3$ is vertical. In fact, using the 2nd and the 7th equation of (\ref{RCH}) we have $(a_1^3)^2= (a_2^3)^2$.
On the other hand, using the 3rd and the 6th equation of (\ref{RCH}) we have $a_1^3 a_2^3=0$. This leads to $a_1^3= a_2^3=0$.  Hence $a_3^3=\pm1$, i.e., $E_3$ is vertical. Substituting this into the 2nd and the 4th equation of (\ref{RCH}) separately, we have $\sigma^2=\frac{l^2}{4}\;{\rm and}\;K^N=4m$. It is not difficult to find that the  orthonormal frame $\{e_1=E_1,\; e_2=E_2,\;e_3=E_3\}$ is  adapted to  $\pi$ with $e_3= E_3$  being vertical. Clearly, by Remark \ref{r2},  the Riemannian  submersion  $\pi:M_{m,l}^3\to (N^2,h)$ can be locally expressed as $\pi:M_{m,l}^3=(\r^{3},\frac{dx^2+dy^2}{[1+m(x^2+y^2)]^2}+[dz+\frac{l}{2}\frac{y
dx-x dy}{1+m(x^2+y^2)}]^2)\to N^2=(\r^2,\frac{dx^2+dy^2}{[1+m(x^2+y^2)]^2})$ with $\pi(x,y,z)=(x,y)$ with respect to local coordinates.\\
 Combining Case I and Case II,  we obtain the theorem.\\
\end{proof}
Note that the full classification of BCV 3-spaces is as follows(see \cite{BDI})\\
(a) if $m=l=0$, then $M_{m,l}^3\cong\r^3$;\\
(b): if $4m=l^2>0$, then $M_{m,l}^3\cong S^3(m)\backslash\{\infty\}$;\\
(c): if $m>0$ and $l=0$, then $M_{m,l}^3\cong S^2(4m)\backslash\{\infty\}\times\r$;\\
(d): if $m<0$ and $l=0$, then $M_{m,l}^3\cong H^2(4m)\backslash\{\infty\}\times\r$;\\
(e): if $m>0$ and $l\neq0$, then $M_{m,l}^3\cong SU(2)\backslash\{\infty\}$;\\
(f): if $m<0$ and $l\neq0$, then $M_{m,l}^3\cong \widetilde{SL}(2,\r)$;\\
(g): if $m=0$ and $l\neq0$, then $M_{m,l}^3\cong Nil$.\\

Therefore, applying Theorem \ref{HTh}, we have
\begin{corollary}\label{Coro2}
Any harmonic Riemannian  submersion   $\pi:M_{m,l}^3=(\r^{3},\frac{dx^2+dy^2}{[1+m(x^2+y^2)]^2}+[dz+\frac{l}{2}\frac{y
dx-x dy}{1+m(x^2+y^2)}]^2)\to (N^2,h)$ from BCV 3-spaces can be locally expressed as\\
$(i)$: $\pi:(\r^{3},dx^2+dy^2+dz^2)\to (\r^2,dx^2+dy^2)$ with $\pi(x,y,z)=(x,y)$;\;or\\
$(ii)$: $\pi:S^3(m)=(\r^{3},\frac{dx^2+dy^2}{[1+m(x^2+y^2)]^2}+[dz+\frac{l}{2}\frac{y
dx-x dy}{1+m(x^2+y^2)}]^2)\to S^2(4m)=(\r^2,\frac{dx^2+dy^2}{[1+m(x^2+y^2)]^2})$ with $\pi(x,y,z)=(x,y)$ and Gauss curvature of the base space $K^N=4m$, where $4m=l^2>0$;\;or\\
$(iii)$: $\pi:S^2(4m)\times\r=(\r^{3},\frac{dx^2+dy^2}{[1+m(x^2+y^2)]^2}+dz^2)\to S^2(4m)=(\r^2,\frac{dx^2+dy^2}{[1+m(x^2+y^2)]^2})$ with $\pi(x,y,z)=(x,y)$ and Gauss curvature of the base space $K^N=4m$, where $m>0$ and $l=0$;\;or\\
$(iv)$: $\pi:H^2(4m)\times\r=(\r^{3},\frac{dx^2+dy^2}{[1+m(x^2+y^2)]^2}+dz^2)\to H^2(4m)=(\r^2,\frac{dx^2+dy^2}{[1+m(x^2+y^2)]^2})$ with $\pi(x,y,z)=(x,y)$ and Gauss curvature of the base space $K^N=4m$, where $m<0$ and $l=0$;\;or\\
$(v)$: $\pi:SU(2)=(\r^{3},\frac{dx^2+dy^2}{[1+m(x^2+y^2)]^2}+[dz+\frac{l}{2}\frac{y
dx-x dy}{1+m(x^2+y^2)}]^2)\to S^2(4m)=(\r^2,\frac{dx^2+dy^2}{[1+m(x^2+y^2)]^2})$ with $\pi(x,y,z)=(x,y)$ and  Gauss curvature of the base space $K^N=4m$, where $m>0$ and $l\neq0$;\;or\\
$(vi)$: $\pi:\widetilde{SL}(2,\r)=(\r^{3},\frac{dx^2+dy^2}{[1+m(x^2+y^2)]^2}+[dz+\frac{l}{2}\frac{y
dx-x dy}{1+m(x^2+y^2)}]^2)\to H^2(4m)=(\r^2,\frac{dx^2+dy^2}{[1+m(x^2+y^2)]^2})$ with $\pi(x,y,z)=(x,y)$ and  Gauss curvature of the base space $K^N=4m$, where $m<0$ and $l\neq0$;\;or\\
$(vii)$: $\pi:Nil=(\r^{3},dx^2+dy^2+[dz+\frac{l}{2}(y
dx-x dy)]^2)\to (\r^2,dx^2+dy^2)$ with $\pi(x,y,z)=(x,y)$, where $l\neq0$.
\end{corollary}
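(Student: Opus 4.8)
The plan is to obtain this corollary directly from Theorem~\ref{HTh} combined with the parameter classification (a)--(g) of BCV 3-spaces. Theorem~\ref{HTh} already carries the entire analytic burden: it asserts that a harmonic Riemannian submersion from $M^3_{m,l}$ exists only when the target has constant Gauss curvature $K^N=4m$, and that, up to equivalence, every such submersion is locally the projection $\pi(x,y,z)=(x,y)$ of $M^3_{m,l}$ onto $(\r^2, h=\frac{dx^2+dy^2}{[1+m(x^2+y^2)]^2})$. Hence the remaining work is purely a matter of specialization: for each admissible pair $(m,l)$ I would substitute its values into this universal model and identify the resulting total space and base geometrically.

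First I would settle the identification of the base. The conformally flat metric $h=\frac{dx^2+dy^2}{[1+m(x^2+y^2)]^2}$ on $\r^2$ is the standard model of the simply connected surface of constant curvature $K^N=4m$; thus the base is flat $(\r^2,dx^2+dy^2)$ when $m=0$, the round sphere $S^2(4m)$ when $m>0$, and the hyperbolic plane $H^2(4m)$ when $m<0$. This is the one genuinely geometric point, and it is classical.

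Next I would run through the seven algebraic cases. When $l=0$ the one-form $\frac{l}{2}\frac{y\,dx-x\,dy}{1+m(x^2+y^2)}$ vanishes, so the vertical term collapses to $dz^2$ and the metric splits as the Riemannian product $(\r^2,h)\times\r$; together with the base identification this yields cases (iii) for $m>0$ and (iv) for $m<0$. When $m=0$ one has $F=1+m(x^2+y^2)=1$, so the BCV metric reduces to $dx^2+dy^2+[dz+\frac{l}{2}(y\,dx-x\,dy)]^2$ over the flat base; taking $l=0$ recovers the orthogonal projection $\r^3\to\r^2$ of case (i), while $l\neq0$ produces the Nil fibration of case (vii). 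The three remaining cases follow immediately from (b), (e), (f): $4m=l^2>0$ gives the Hopf-type fibration on $S^3(m)$, case (ii); $m>0,\,l\neq0$ gives $SU(2)$, case (v); and $m<0,\,l\neq0$ gives $\widetilde{SL}(2,\r)$, case (vi). In each instance the target curvature $K^N=4m$ forced by Theorem~\ref{HTh} matches the claimed model.

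I do not expect any serious obstacle here, since the corollary is essentially a dictionary between the $(m,l)$-parameter regions and the named geometries. The only steps demanding mild care are the two degenerations isolated above---the product splitting at $l=0$ and the flattening at $m=0$---after which every assertion follows by direct substitution into the local form supplied by Theorem~\ref{HTh}.
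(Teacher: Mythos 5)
Your proposal is correct and follows essentially the same route as the paper: the paper's own proof is simply the combination of Theorem~\ref{HTh} with the standard parameter classification (a)--(g) of BCV 3-spaces, which is exactly what you do. Your additional remarks on the two degenerations ($l=0$ giving the product splitting and $m=0$ giving the flat base) and on identifying $h=\frac{dx^2+dy^2}{[1+m(x^2+y^2)]^2}$ as the constant-curvature-$4m$ model only make explicit what the paper leaves implicit, and are accurate.
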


We give the following classification of harmonic Riemannian  submersion  from  Thurston's 3-dimensional geometries.
\begin{theorem}\label{T}
With respect to local coordinates, any harmonic Riemannian  submersion   $\pi:T^3\to (N^2,h)$ from Thurston's 3-dimensional geometries can be locally expressed as\\
$(i)$: $\pi:(\r^{3},dx^2+dy^2+dz^2)\to (\r^2,dx^2+dy^2)$ with $\pi(x,y,z)=(x,y)$;\;or\\
$(ii)$: $\pi:S^3(m)=(\r^{3},\frac{dx^2+dy^2}{[1+m(x^2+y^2)]^2}+[dz+\frac{l}{2}\frac{y
dx-x dy}{1+m(x^2+y^2)}]^2)\to S^2(4m)=(\r^2,\frac{dx^2+dy^2}{[1+m(x^2+y^2)]^2})$ with $\pi(x,y,z)=(x,y)$, and Gauss curvature of the base space $K^N=4m$, where $4m=l^2>0$;\;or\\
$(iii)$: $\pi:S^2(4m)\times\r=(\r^{3},\frac{dx^2+dy^2}{[1+m(x^2+y^2)]^2}+dz^2)\to S^2(4m)=(\r^2,\frac{dx^2+dy^2}{[1+m(x^2+y^2)]^2})$ with $\pi(x,y,z)=(x,y)$, and Gauss curvature of the base space $K^N=4m$, where $m>0$ and $l=0$;\;or\\
$(iv)$: $\pi:H^2(4m)\times\r=(\r^{3},\frac{dx^2+dy^2}{[1+m(x^2+y^2)]^2}+dz^2)\to H^2(4m)=(\r^2,\frac{dx^2+dy^2}{[1+m(x^2+y^2)]^2})$ with $\pi(x,y,z)=(x,y)$, and Gauss curvature of the base space $K^N=4m$, where $m<0$ and $l=0$;\;or\\
$(v)$: $\pi:\widetilde{SL}(2,\r)=(\r^{3},\frac{dx^2+dy^2}{[1+m(x^2+y^2)]^2}+[dz+\frac{l}{2}\frac{y
dx-x dy}{1+m(x^2+y^2)}]^2)\to H^2(4m)=(\r^2,\frac{dx^2+dy^2}{[1+m(x^2+y^2)]^2})$ with $\pi(x,y,z)=(x,y)$, and Gauss curvature of the base space $K^N=4m$, where $m<0$ and $l\neq0$;\;or\\
$(vi)$: $\pi:Nil=(\r^{3},dx^2+dy^2+[dz+\frac{l}{2}(y
dx-x dy)]^2)\to (\r^2,dx^2+dy^2)$ with $\pi(x,y,z)=(x,y)$, where $l\neq0$.
\end{theorem}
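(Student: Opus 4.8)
The plan is to deduce Theorem \ref{T} from the BCV classification of Corollary \ref{Coro2}, handling separately the two Thurston geometries that are not of BCV type. Thurston's eight model geometries are $\mathbb{R}^3$, $S^3$, $H^3$, $S^2\times\mathbb{R}$, $H^2\times\mathbb{R}$, $\widetilde{SL}(2,\mathbb{R})$, $Nil$ and $Sol$. Comparing this list with the classification (a)--(g) of BCV $3$-spaces (cf. \cite{BDI}), exactly six of them occur as $M^3_{m,l}$ for suitable parameters: $\mathbb{R}^3$ ($m=l=0$), the round sphere $S^3$ ($4m=l^2>0$), $S^2\times\mathbb{R}$ ($m>0,\,l=0$), $H^2\times\mathbb{R}$ ($m<0,\,l=0$), $\widetilde{SL}(2,\mathbb{R})$ ($m<0,\,l\neq0$) and $Nil$ ($m=0,\,l\neq0$). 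The two exceptions are $H^3$ and $Sol$, which are not BCV spaces and must be treated on their own.

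For the six BCV geometries I would invoke Corollary \ref{Coro2} directly: any harmonic Riemannian submersion from a BCV space is, up to equivalence, locally one of the maps (i)--(vii) recorded there. The only point requiring attention is the spherical geometry. The manifold $S^3$ coincides with $SU(2)$, and Corollary \ref{Coro2} lists two entries over it, namely the round case (ii) with $4m=l^2$ and the case (v) with $m>0,\,l\neq0$. As a Thurston model geometry only the constant-curvature (round) metric is admissible, so the homogeneous Berger-type metrics on $SU(2)$ with $4m\neq l^2$ are excluded; case (v) of the corollary is therefore discarded and the spherical entry reduces to case (ii). Re-indexing the remaining entries then gives precisely items (i)--(vi) of the theorem, with $\widetilde{SL}(2,\mathbb{R})$ and $Nil$ becoming (v) and (vi).

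It remains to treat $H^3$ and $Sol$. For $H^3$ there is nothing to add: by \cite{WO} no harmonic Riemannian submersion $\pi:H^3\to(N^2,h)$ exists for any target surface, so this geometry contributes no entry. For $Sol$ I would proceed exactly as in the proof of Theorem \ref{HTh}: fix the standard orthonormal frame $\{E_1,E_2,E_3\}$ on $Sol=(\mathbb{R}^3,e^{2z}dx^2+e^{-2z}dy^2+dz^2)$, expand the natural frame of an arbitrary Riemannian submersion as $e_i=\sum_j a_i^j E_j$, compute the curvature components $R^M(e_i,e_j,e_k,e_l)$ in terms of the entries $a_i^j$, and substitute them into the harmonicity system (\ref{RC0}) of Proposition \ref{PH1}. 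The resulting algebraic constraints turn out to be inconsistent, so $Sol$ admits no harmonic Riemannian submersion onto a surface; this is the content of the classification in \cite{WO3,WO4}, which may simply be cited in place of the computation. Combining the three parts yields exactly the list (i)--(vi).

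I expect the $Sol$ case to be the main obstacle. The six BCV geometries are immediate from Corollary \ref{Coro2} and $H^3$ is dispatched by citation, so all the genuine work concentrates on showing that over $Sol$ one can never arrange $\kappa_1=\kappa_2=0$. The difficulty is that, unlike the BCV setting where the special curvature structure forces $a_1^3=a_2^3=0$ (so that $E_3$ is automatically vertical) and pins down $\sigma^2=l^2/4$, the non-symmetric curvature of $Sol$ does not collapse so cleanly, and one must rule out every orthonormal rotation $(a_i^j)$ of the frame. This requires either the full computation of \cite{WO4} or a careful case analysis of the matrix $(a_i^j)$ subject to the seven equations of (\ref{RC0}).
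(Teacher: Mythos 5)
Your proposal is correct and follows essentially the same route as the paper: the six BCV-type Thurston geometries are read off from Corollary \ref{Coro2}, while $H^3$ and $Sol$ are excluded by citing \cite{WO} and \cite{WO4} (the paper simply cites these rather than redoing the $Sol$ computation). Your added observation that entry $(v)$ of Corollary \ref{Coro2} must be discarded because only the round metric on $S^3\cong SU(2)$ is a Thurston model geometry is a point the paper leaves implicit, and it correctly accounts for the theorem having six items versus the corollary's seven.
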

\begin{proof}
It is well known that Thurston's eight models for 3-dimensional geometries consist of: 3-dimensional space forms $S^3$, $\r^3$, $H^3$, the product spaces: $S^2\times\r$, $H^2\times\r$,  and $\widetilde{SL}(2, R)$, $Nil, Sol$. We see that BCV 3-sppaces include six of Thurston's eight 3-dimensional geometries in the family with the exceptions of $H^3$ and Sol space. However, it follows from the results  in \cite{WO} and \cite{WO4} that
there is no a harmonic Riemannian  submersion  from $H^3$, or Sol space to a surface. From these and using Corollary \ref{Coro2}, we get the theorem.
\end{proof}
\begin{remark}\label{r4}
Naturally, a harmonic map is a biharmonic map. We would like to point out that  any  Riemannian  submersion  from $H^3$ or Sol to a surface is neither harmonic nor biharmonc (see \cite{WO},\cite{WO4})
\end{remark}

\section{ Harmonic Riemannian submersion from  Berger 3-sphere $S_{\varepsilon}^3$}
The Hopf map  $ \psi: S^3
(1) \to S^2(4)$ given by
\begin{equation}\label{cmc-1}
\begin{array}{lll}
\psi(x^1,x^2,x^3,x^4)= \frac{1}{2}
(2x^1x^3 + 2x^2x^4, 2x^2x^3- 2x^1x^4,(x^1)^2 + (x^2)^2-(x^3)^2 -(x^4)^2),
\end{array}
\end{equation}
or,
\begin{equation}\label{cmc0}
\begin{array}{lll}
\psi(z, w) = \frac{1}{2}(2zw, |z|^2 - |w|^2) ,
\end{array}
\end{equation}
is a Riemmanian submersion with
totally geodesic $\psi^{-1}(\psi(z, w))$ being the great circle passing through $(z, w)$ and
$(iz, iw)$(see \cite{B}), where $z = x^1 + ix^2$, $w = x^3 + ix^4$ and $S^2(4)$ denotes a 2-sphere with constant Gauss curvature $4$  (i.e., spherical radius $\frac{1}{2}$).
Obviously,  the map $\psi$ is a harmonic Riemmanian submersion.\\

Performing the
following biconformal change of metric to the canonical metric $g$ on $S^3$ with respect to the Hopf map as follows :
\begin{equation}\label{cmc1}
\begin{array}{lll}
g_\varepsilon|_{ T^HS^3\times T^HS^3} =g|_{ T^HS^3\times T^HS^3} , g_\varepsilon|_{ T^VS^3\times T^VS^3}  = \varepsilon^2g, g_\varepsilon|_{ T^HS^3\times T^VS^3} = 0,
\end{array}
\end{equation}
where $T ^V S^3$
and $T^HS^3$ denote  the vertical and the horizontal
spaces determined by $\psi$, respectively. We call a sphere $S^3$ a Berger
3-sphere if the sphere endowed with the metric $g_\varepsilon$  and denote by $S^3_\varepsilon$, that is, $S^3_\varepsilon=(S^3,g_\varepsilon)$, where $\varepsilon\neq0$.\\
For $x \in S^3$, it is not difficult to check that\\ (i): the vector fields
\begin{equation}\label{cmc2}
\begin{array}{lll}
X_1(x) = (-x^2, x^1, -x^4, x^3),\;
 X_2(x) = (-x^4, -x^3, x^2, x^1),\;
X_3(x) = (-x^3, x^4, x^1, -x^2)
\end{array}
\end{equation}
parallelize $S^3$,\\
(ii) $X_1$ is tangent to the fibres of the Hopf map (i.e. $d\psi(X_1) = 0$),\\
and\\
(iii)$ X_2$ and $X_3$ are horizontal, but not bacsic.\\

From (\ref{cmc1}) we immediately deduce that the global frame field
\begin{equation}\label{cmc3}
\begin{array}{lll}
\{E_1 = X_2, E_2 = X_3, E_3 =\varepsilon^{-1} X_1\}
\end{array}
\end{equation}
is an orthonormal frame field on $S^3_\varepsilon$. \\

We adopt the following notation and sign convention for Riemannian
curvature operator:
\begin{equation}
 R(X,Y)Z=\nabla_{X}\nabla_{Y}Z
-\nabla_{Y}\nabla_{X}Z-\nabla_{[X,Y]}Z,\\
\end{equation}
and the Riemannian and the Ricci curvatures:
\begin{equation}
\begin{array}{lll}
&&  R(X,Y,Z,W)=g( R(Z,W)Y,X),\\
&& {\rm Ric}(X,Y)= {\rm Trace}_{g}R=\sum\limits_{i=1}^3 R(Y, e_i, X,
e_i)=\sum\limits_{i=1}^3 \langle R( X,e_i) e_i, Y\rangle.
\end{array}
\end{equation}
In the given frame field, a straightforward computation shows that
\begin{equation}\label{Lieb}
[E_1,E_2]=2 \varepsilon E_{3},\;\; [E_2,E_3]=\frac{2}{\varepsilon}E_1,\; [E_3,E_1]=\frac{2}{\varepsilon}E_2.
\end{equation}
The Levi-Civita connection
of the metric $g_\varepsilon$  has the expression
\begin{equation}\label{g1}
\begin{cases}
\nabla_{E_{1}}E_{1}=0,\;\;\nabla_{E_{1}}E_{2}=\varepsilon E_{3},\;\;\nabla_{E_{1}}E_{3}=-\varepsilon E_{2},\\
\nabla_{E_{2}}E_{1}=-\varepsilon E_{3},\;\;\nabla_{E_{2}}E_{2}=0,\;\;\nabla_{E_{2}}E_{3}=\varepsilon E_{1},\\
\nabla_{E_{3}}E_{1}=\frac{2-\varepsilon^2}{\varepsilon}E_{2},\;\;\nabla_{E_{3}}E_{2}=-\frac{2-\varepsilon^2}{\varepsilon} E_{1},\;\;
\nabla_{E_{3}}E_{3}=0.
\end{cases}
\end{equation}

A further computation (see also \cite{B}) gives the possible
nonzero components of the curvatures:
\begin{equation}\label{g2}
\begin{array}{lll}
 R_{1212}=g(R(E_{1},E_{2})E_{2},E_{1})=4-3\varepsilon^2,\\
R_{1313}=g(R(E_{1},E_{3})E_{3},E_{1})=R_{2323}=g(R(E_{2},E_{3})E_{3},E_{2})=\varepsilon^2,\\
{\rm all\;other }\; R_{ijkl}=g(R(E_{k},E_{l})E_{j},E_{i})=0,\;i,j,k,l=1,2,3.
\end{array}
\end{equation}
 and the
Ricci curvature:
\begin{equation}\label{g3}
\begin{array}{lll}
 {\rm Ric}\, (E_{1},E_{1})={\rm Ric}\,
(E_{2},E_{2})=4-2\varepsilon^2,\;\;\\{\rm Ric}\,
(E_{3},E_{3})=2\varepsilon^2,\; {\rm all\;other }\; {\rm
Ric}\, (E_i,E_j)=0,\;i\neq j.
\end{array}
\end{equation}

\begin{remark}\label{r3}
From (\ref{Le}), (i), (ii), (iii), (\ref{cmc3}), (\ref{Lieb}) and (\ref{g1}),  we can easily check that\\
the  map $ \psi: S^3_{\varepsilon} \to S^2(4)$ given by
$$\pi(x^1,x^2,x^3,x^4)= \frac{1}{2}(2x^1x^3 + 2x^2x^4, 2x^2x^3- 2x^1x^4,(x^1)^2 + (x^2)^2-(x^3)^2 -(x^4)^2),$$\;or,
$$\psi(z, w) = \frac{1}{2}(2zw, |z|^2 - |w|^2)$$
is a Riemmanian submersion with
totally geodesic and $E_3$ being vertical, where $z = x^1 + ix^2$, $w = x^3 + ix^4$.  It is actually a harmonic Riemmanian submersion.
\end{remark}

Now we are ready to give the following classification of harmonic Riemannian submersions from  Berger 3-sphere $S^3_{\varepsilon}$.

\begin{theorem}\label{HTh}
A harmonic Riemannian  submersion $\pi:S_{\varepsilon}^3\to (N^2,h)$ from a Berger 3-sphere to a surface exists only in $S^3_{\varepsilon}\to S^2(4)$ and can be  expressed as $\pi:S^3_{\varepsilon}\to S^2(4)$ with $\pi(x^1,x^2,x^3,x^4)= \frac{1}{2}(2x^1x^3 + 2x^2x^4, 2x^2x^3- 2x^1x^4,(x^1)^2 + (x^2)^2-(x^3)^2 -(x^4)^2),$
or\; $\pi(z, w) = \frac{1}{2}(2zw, |z|^2 - |w|^2)$, where $z = x^1 + ix^2$, $w = x^3 + ix^4$, and  Gauss curvature of the base sphere  $K^N=4$.
\end{theorem}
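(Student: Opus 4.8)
The plan is to run the same scheme that proved the BCV classification above (via the data $\{f_1,f_2,f_3,\kappa_1,\kappa_2,\sigma\}$ and the curvature identities (\ref{RC})), now with the Berger curvature (\ref{g2}). Let $\pi:S^3_\varepsilon\to (N^2,h)$ be a harmonic Riemannian submersion with a natural orthonormal frame $\{e_1,e_2,e_3\}$ and generalized integrability data $\{f_1,f_2,f_3,\kappa_1,\kappa_2,\sigma\}$, and write $e_i=\sum_{j=1}^{3}a_i^jE_j$, so that $A=(a_i^j)\in O(3)$. The first step is to express the curvature of $S^3_\varepsilon$ in a frame-free way: by (\ref{g2}) the curvature operator on bivectors is $\mathcal R=\varepsilon^2\,\mathrm{Id}+R\,P$, where $R=4-4\varepsilon^2=R_{1212}-R_{1313}$ and $P$ is the orthogonal projection onto the line spanned by $E_1\wedge E_2$. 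Evaluating $\mathcal R$ on the bivectors $e_i\wedge e_3$ and $e_1\wedge e_2$ and using the orthogonality of $A$ together with the Hodge-star relations $\langle e_2\wedge e_3,E_1\wedge E_2\rangle=a_1^3$, $\langle e_3\wedge e_1,E_1\wedge E_2\rangle=a_2^3$, $\langle e_1\wedge e_2,E_1\wedge E_2\rangle=a_3^3$, I obtain the exact analogue of (\ref{RC2}): each $R^M(e_i,e_3,e_k,e_l)$ equals $\varepsilon^2$ on the diagonal entries plus a multiple of $R$ built only from the third-column entries $a_1^3,a_2^3,a_3^3$. I expect this to be the most delicate step, since it requires keeping the curvature symmetries and the sign conventions of (\ref{RC}) straight; the point that makes the rank-one reduction work is precisely the rotational symmetry $R_{1313}=R_{2323}$ of the Berger metric about $E_3$.

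With these expressions in hand the rest is algebra. Since $\pi$ is harmonic, Proposition \ref{PH} gives $\kappa_1=\kappa_2=0$ and Proposition \ref{PH1} says the data solve (\ref{RC0}); feeding in the curvature formulas yields the Berger analogue of (\ref{RCH}), namely $e_3(\sigma)=a_1^3a_2^3R=0$, $\sigma^2=(a_1^3)^2R+\varepsilon^2=(a_2^3)^2R+\varepsilon^2$, and $K^N=3\sigma^2+(a_3^3)^2R+\varepsilon^2$, together with $e_1(\sigma)=a_2^3a_3^3R$ and $e_2(\sigma)=-a_1^3a_3^3R$. I would then split into two cases exactly as in the BCV proof. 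If $R=0$ (that is, $\varepsilon^2=1$, the round metric), the two $\sigma^2$-equations give $\sigma^2=\varepsilon^2=1$ and the $K^N$-equation gives $K^N=3+1=4$. If $R\neq0$, comparing the two $\sigma^2$-equations forces $(a_1^3)^2=(a_2^3)^2$ while $e_3(\sigma)=0$ forces $a_1^3a_2^3=0$; hence $a_1^3=a_2^3=0$, and orthogonality of $A$ forces $a_3^3=\pm1$, i.e.\ $e_3=\pm E_3$, so that $E_3$ is vertical. Substituting back gives $\sigma^2=\varepsilon^2\neq0$ and again $K^N=3\varepsilon^2+R+\varepsilon^2=4$.

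In both cases $K^N=4$, so the target must be $S^2(4)$. In the case $R\neq0$ the vertical distribution is spanned by $E_3=\varepsilon^{-1}X_1$, whose integral curves are exactly the (rescaled) Hopf fibres, while the horizontal metric is unchanged by the biconformal deformation (\ref{cmc1}); hence $\pi$ coincides, up to equivalence, with the Hopf map, which Remark \ref{r3} already records as a harmonic Riemannian submersion with $E_3$ vertical. In the remaining case $R=0$ the total space is the round $S^3(1)$ and the conclusion reduces to the classical fact that a harmonic Riemannian submersion of $S^3(1)$ onto a surface is, up to isometry, the Hopf fibration onto $S^2(4)$. The only feature of this argument beyond the BCV case is precisely that the degenerate value $R=0$ does not free the vertical direction harmfully: the round case still forces $K^N=4$, so no target other than $S^2(4)$ can occur, which proves the theorem.
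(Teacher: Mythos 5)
Your proposal is correct and follows essentially the same route as the paper: derive the Berger analogue of (\ref{RC2}) in terms of the third-column entries $a_i^3$, apply Propositions \ref{PH} and \ref{PH1} to get the system (\ref{RCHb}), and split into the cases $R=4-4\varepsilon^2=0$ (round sphere, where the paper invokes its BCV theorem while you compute $K^N=4\varepsilon^2=4$ directly from the system) and $R\neq0$ (where the same algebra forces $a_1^3=a_2^3=0$, hence $E_3$ vertical, $\sigma^2=\varepsilon^2$ and $K^N=4$, identifying $\pi$ with the Hopf map). Your frame-free packaging of the curvature operator as $\varepsilon^2\,\mathrm{Id}+R\,P$ is a tidier derivation of the same identities but does not change the argument.
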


\begin{proof}
Let $\pi:S^3_{\varepsilon}\to (N^2,h)$  from   Berger 3-sphere  be a Riemannian submersion with
an  orthonormal frame $\{e_1,\; e_2,\;e_3\}$, $e_3$  being vertical, and the generalized integrability data $\{ f_1, f_2, f_3, \kappa_1, \kappa_2, \sigma\}$. We denote by $e_i=\sum\limits_{j=1}^{3}a_i^jE_j, i=1,2,3$. Then, using (\ref{RC}), (\ref{g1}), (\ref{g2}) and (\ref{g3}), we have the following terms of curvature
\begin{equation}\label{RC2b}
\begin{array}{lll}
R^{M}(e_1,e_3,e_1,e_2)=-a_{2}^{3}a_{3}^{3}R,\;
R^{M}(e_1,e_3,e_1,e_3)=(a_{2}^{3})^2R+\varepsilon^2,\\
R^{M}(e_1,e_3,e_2,e_3)=-a_{1}^{3}a_{2}^{3}R,\;
R^{M}(e_1,e_2,e_1,e_2)=(a_{3}^{3})^2R+\varepsilon^2,\\
R^{M}(e_1,e_2,e_2,e_3)=a_{1}^{3}a_{3}^{3}R,\;
R^{M}(e_2,e_3,e_1,e_3)=-a_{1}^{3}a_{2}^{3}R,\;
R^{M}(e_2,e_3,e_2,e_3)=(a_{1}^{3})^2R+\varepsilon^2,
\end{array}
\end{equation}
where $R=4-4\varepsilon^2$.\\
It follows from Proposition \ref{PH} that the Riemannian  submersion $\pi$ is harmonic if and only if $\kappa_1=\kappa_2=0$. By  Proposition \ref{PH1}, then (\ref{RC2b}) becomes

\begin{equation}\label{RCHb}
\begin{cases}
e_1(\sigma)=a_{2}^{3}a_{3}^{3}R,\\
\sigma^2=(a_{2}^{3})^2R+\varepsilon^2,\;\\
e_3(\sigma)=a_{1}^{3}a_{2}^{3}R,\;\\
K^N=3\sigma^2+(a_{3}^{3})^2R+\varepsilon^2,\\
e_2(\sigma)=-a_{1}^{3}a_{3}^{3}R,\\
e_3(\sigma)=-a_{1}^{3}a_{2}^{3}R,\;\\
\sigma^{2}=(a_{1}^{3})^2R+\varepsilon^2,
\end{cases}
\end{equation}
where $R=4-4\varepsilon^2$ and $K^N=e_1(f_2)-e_2(f_1)-f_{1}^{2}-f_{2}^{2}+2f_{3}\sigma$.\\

Noting that if $R=4-4\varepsilon^2=0$,  then the Berger 3-sphere $S^3_{\varepsilon}$ is actually a standard unit sphere $S^3$.  Therefore, it follows
from Theorem 3.1 that  the harmonic Riemannian  submersion $\pi$  from $S^3$  is the  Riemannian  submersion $\pi:S^3\to S^2(4)$ with Gauss curvature of the base sphere $K^N=4$. \\
From now on, we assume that $R=4-4\varepsilon^2\neq0$. In this case,  we must have  $a_3^3=\pm1$,\; i.e., $E_3$ is vertical. In fact,  using the 2nd equation and the 7th equation of (\ref{RCHb}) we have $(a_1^3)^2= (a_2^3)^2$. On the other hand, using the 3rd  and the 6th equation of (\ref{RCHb}) we have $a_1^3 a_2^3=0$. This means $a_1^3= a_2^3=0$. Hence $a_3^3=\pm1$, i.e., $E_3$ is vertical. We substitute this into the 2nd and the 4th equation of (\ref{RCHb}) separately,  to have $\sigma^2=\varepsilon^2\;{\rm and}\;K^N=4$. Using (\ref{g1}), one sees that  $\{e_1=E_1,\; e_2=E_2,\;e_3=E_3\}$ is an  orthonormal frame on $S^3_{\varepsilon}$ with $e_3= E_3$  being vertical and the  generalized integrability data $\{ f_1=f_2= \kappa_1=\kappa_2=0,\;\sigma=\varepsilon^2,\;f_3=\frac{2}{\varepsilon^2}\}$. Obviously, by Remark \ref{r3},  a harmonic Riemannian  submersion  $\pi:S_{\varepsilon}^3\to (N^2,h)$ can be expressed as $\pi:S^3_{\varepsilon}\to S^2(4)$ with
$\pi(x^1,x^2,x^3,x^4)= \frac{1}{2}(2x^1x^3 + 2x^2x^4, 2x^2x^3- 2x^1x^4,(x^1)^2 + (x^2)^2-(x^3)^2 -(x^4)^2),$
or $\pi(z, w) = \frac{1}{2}(2zw, |z|^2 - |w|^2)$, where $z = x^1 + ix^2$, $w = x^3 + ix^4$.\\
 From which the theorem follows.

\end{proof}

\end{document}